\setlist[enumerate]{font=\normalfont}
\newtheorem{theorem}{Theorem}[section]
\newtheorem*{theorem*}{Theorem}
\newtheorem{lemma}[theorem]{Lemma}
\newtheorem*{lemma*}{Lemma}
\newtheorem{corollary}[theorem]{Corollary}
\newtheorem{proposition}[theorem]{Proposition}
\newtheorem{question}[theorem]{Question}
\newtheorem{example}[theorem]{Example}
\theoremstyle{definition} 
\newtheorem{definition}[theorem]{Definition} 
\newtheorem{remark}[theorem]{Remark} 
\newtheorem*{notation*}{Notation}
\numberwithin{equation}{section}
\newcommand{\cP}{\mathcal P}
\def\N{\mathbb{N}}
\def\Z{\mathbb{Z}}
\DeclareMathOperator{\sign}{sign}
\newcommand{\BF}{\textup{BF}}
\subjclass[2020]{Primary 37B10, Secondary 37A55}
\keywords{Unital shift equivalence, eventual conjugacy, continuous orbit equivalence, C*-algebras}
\title{Unital shift equivalence}
\author[K. A. Brix]{Kevin Aguyar Brix}
\address[K. A. Brix]{Department of Mathematics and Computer Science, University of Southern Denmark, 5230 Odense, Denmark}
\email{kabrix@imada.sdu.dk}
\author[E. Ruiz]{Efren Ruiz}
\address[E. Ruiz]{Department of Mathematics, University of Hawaii, Hilo, 200 W. Kawili St., Hilo, Hawaii, 96720-4091 U.S.A.} 
\email{ruize@hawaii.edu}
\thanks{KAB was supported by a Reintegration Fellowship from the Carlsberg Foundation (CF23--1328).}
\begin{document}

\setlength{\parindent}{0cm} 
\setlength{\parskip}{0.5cm}

\maketitle
\begin{abstract}
We introduce and study a unital version of shift equivalence for finite square matrices over the nonnegative integers.
In contrast to the classical case, we show that unital shift equivalence does not coincide with one-sided eventual conjugacy.
We also prove that unital shift equivalent matrices define one-sided shifts of finite type that are continuously orbit equivalent. 
Consequently, unitally shift equivalent matrices have isomorphic topological full groups and isomorphic Leavitt path algebras, the latter being related to Hazrat's graded classification conjecture in algebra.
\end{abstract}

\section{Introduction}
Shift equivalence has played a prominent role in the still open problem of classifying shifts of finite type up to conjugacy, dating back to Williams' seminal work \cite{Williams1973}. 
Although shift equivalence does not provide a complete description of conjugacy \cite{Kim-Roush92,Kim-Roush99}, it is an important relation for shifts of finite type that is decidable and has spawned an immense body of work, see e.g. \cite[Chapter 7]{Lind-Marcus}.
It also provides deep connections to other mathematical branches such as algebra \cite{Hazrat2013, Abrams-Ruiz-Tomforde} and C*-algebra theory \cite{Bratteli-Kishimoto, Carlsen-DorOn-Eilers}. 
Two square matrices $A$ and $B$ with nonnegative integer entries are \emph{shift equivalent} if there are nonnegative integer rectangular matrices $R$ and $S$ and an integer $\ell\geq 1$ satisfying the equations
\[
  A^\ell = RS,\quad B^\ell = SR, \quad AR=RB, \quad BS=SA.
\]
The matrices $A$ and $B$ determine stationary inductive systems whose limits are dimension groups (we give details below) with a canonical order structure and $\Z[x,x^{-1}]$ action,
and the matrices $R$ and $S$ induce isomorphisms of these dimension modules attached to $A$ and $B$, respectively.
It transpires that the dimension module consisting of this dimension group together with its order structure and a canonical automorphism is a complete description of shift equivalence. 

The purpose of this short note is three-fold:
\begin{enumerate}
    \item First, we wish to call attention to a unital version of shift equivalence that is relevant for the study of one-sided shifts of finite type.
    \item Second, we show by examples that unital shift equivalence does not coincide with one-sided eventual conjugacy (in contrast to the two-sided case) nor with Matsumoto's eventual conjugacy; consequently, we do not yet have a dynamical description of unital shift equivalence.
    \item Third, we prove that unital shift equivalence implies continuous orbit equivalence for all shifts of finite type, and this should be viewed as a one-sided version of Boyle's recent result that shift equivalence implies flow equivalence \cite{Boyle2024}.
\end{enumerate}

For a shift equivalence to be \emph{unital}, we require that naturally defined order units in the dimension groups are preserved under the isomorphisms induced by $R$ and $S$, see \cref{def:SE+}.
From the point of view of \emph{one-sided} shifts of finite type, this unital version is a more natural relation to study.
There are important examples of shift equivalent matrices where we do not know if they are in fact strong shift equivalent, and we show here that many of them are in fact unitally shift equivalent. This means that an analysis of the one-sided shifts of finite type could provide additional insights into these problems. 

Symbolic dynamics enjoys fruitful connections with C*-algebra theory, see e.g. \cite{Cuntz-Krieger1980,Eilers-Restorff-Ruiz-Sorensen}, and pure algebra \cite{Abrams-Pino2005} ,
and unital shift equivalence fits directly into one of the prominent conjectures put forth by Hazrat \cite[Conjecture 1]{Hazrat2013}:
two matrices are unitally shift equivalent if and only if their Leavitt path algebras are graded isomorphic.
See \cite{Abrams-Ruiz-Tomforde, Brix-DorOn-Hazrat-Ruiz} for recent advances on this problem.
There is an analogous C*-algebraic conjecture: two matrices are unitally shift equivalent if and only if their Cuntz--Krieger C*-algebras are *-isomorphic in a gauge-equivariant way.
A consequence for C*-algebras of our result that unital shift equivalence implies continuous orbit equivalence is that a gauge-equivariant *-isomorphism between Cuntz--Krieger algebras implies the existence of a (possibly different) diagonal-preserving *-isomorphism.
We do not know of a C*-algebraic proof of such a result.

We know from work of Williams and of Kim and Roush that matrices are shift equivalent if and only if the two-sided shifts of finite type they determine are eventually conjugate in the sense that all but finitely many higher powers are conjugate \cite[Theorem 7.5.15]{Lind-Marcus}.
At the moment, we do not know of a dynamical description of unital shift equivalence.
We verify that \emph{one-sided eventual conjugacy} (i.e. having conjugate higher powers) as studied by Boyle, Fiebig, and Fiebig \cite{Boyle-Fiebig-Fiebig} is too strong, and we also show that Matsumoto's notion of one-sided eventual conjugacy is too strong.
Examples show Matsumoto's eventual conjugacy does not imply having conjugate higher powers, and this reveals a discrepancy in the terminology that we clarify.
We leave the converse problem open: \emph{does having conjugate higher powers imply Matsumoto's eventual conjugacy?}.
We emphasise that an affirmative answer to this problem will resolve the question of whether Ashley's graph is strong shift equivalent to the matrix $(2)$.
The three competing notions (unital shift equivalence, having conjugate higher powers, and Matsumoto's eventual conjugacy) are therefore mutually distinct, and we depict them in the diagram below in which none of the arrows can be inverted
{\normalsize
  \[
    \begin{tikzcd}
      & \textup{One-sided conjugacy} \arrow[dl, bend right] \arrow[dr, bend left] & \\
      \textup{Higher powers} \arrow[dr, bend right] \arrow[rr, dashed, "?"] &  & \textup{Matsumoto's eventual conjugacy} \arrow[dl, bend left]  \\
      & \textup{Unital shift equivalence} &
    \end{tikzcd} 
  \]
  }

Very recently, Boyle resolved the problem of whether shift equivalence implies flow equivalence in the affirmative \cite{Boyle2024}.
Seeing that continuous orbit equivalence is the one-sided analogue of flow equivalence, we were prompted to verify that our notion of unital shift equivalence indeed implies continuous orbit equivalence. We provide the details in the proof of \cref{thm:SE+->COE}.

The paper is structured as follows: 
After a preliminary section in \cref{sec:prelim},
we introduce unital shift equivalence in \cref{sec:SE+} which may be thought of as a one-sided analogue of shift equivalence.
In \cref{sec:discrepancy}, we discuss the connection between unital shift equivalence and having conjugate higher powers as well as Matsumoto's eventual conjugacy. This reveals an unfortunate discrepancy in the terminology in the literature.
In \cref{sec:SE+->COE}, we prove that unital shift equivalence implies continuous orbit equivalence. This is a one-sided analogue of Boyle's recent result that shift equivalence implies flow equivalence.

\section*{Acknowledgements}
We thank Mike Boyle for numerous useful comments on early drafts of this paper and Søren Eilers for finding the path of matrices in \cref{ex:Rourke} using a computer search.

\section{Preliminaries} \label{sec:prelim}
For a general introduction to symbolic dynamics, we refer the reader to \cite{Lind-Marcus}.
We let $\N$ denote the nonnegative integers including zero.
An $\N$-matrix $A$ is a matrix over $\N$, and if $A$ is an $n\times n$ matrix, we let $|A| = n$ denote the size of $A$.
A square $\N$-matrix $A$ is the adjacency matrix of a directed graph with $|A|$ vertices and $A(i,j)$ edges from vertex $i$ to vertex $j$.
Recall, $A$ is \emph{irreducible} if for any $1\leq i,j\leq n$, there is $N\in \N$ such that $A^N(i,j) > 0$ (equivalently, the directed graph determined by $A$ is strongly connected);
in particular, the zero matrix is not irreducible.
A \emph{sink} in a directed graph is a vertex that emits no edges (this is a zero row in the adjacency matrix),
and a \emph{source} is a vertex that receives no edges (this is a zero column in the matrix).

Let $E$ be a finite directed graph with discrete edge set $E^1$, vertex set $E^0$, and range and source maps $r$ and $s$, respectively.
Let $A$ be the adjacency matrix of $E$.
The \emph{one-sided edge shift} determined by $A$ is the compact Hausdorff space
\[
X_A \coloneq \{ (x_i)_i \in (E^1)^\N : r(x_i) = s(x_{i+1}), i\in \N \},
\]
in the topology inherited from $(E^1)^\N$, together with the shift given by $\sigma_A(x)_i = x_{i+1}$, for all $x = (x_i)_i \in X_A$.
This is a local homeomorphism.
Given two adjacency matrices $A$ and $B$, the edge shifts $(\sigma_A,X_A)$ and $(\sigma_B,X_B)$ are \emph{one-sided conjugate} if there exists a homeomorphism $h\colon X_A \to X_B$ such that $h\circ \sigma_A = \sigma_B\circ h$.

We recall outsplits and insplits for finite directed graphs with no sinks in terms of their adjacency matrices (see \cite[Section 2.4]{Lind-Marcus} for more detailed explanations).
Strong shift equivalence is then the equivalence relation generated by outsplits and insplits (and their inverses) by Williams' theorem \cite[Section 7]{Lind-Marcus}.
A rectangular $\{0,1\}$-matrix $D$ is a \emph{division matrix} if every row contains at least one $1$ and every column contains exactly one $1$.

Let $A$ and $B$ be a square $\N$-matrix with no zero rows. 
We say $B$ is an \emph{outsplit} of $A$ if there are a division matrix $D$ and an $\N$-matrix $E$ such that $A = DE$ and $B = ED$.
If this is the case, we say $A$ is an \emph{outamalgamation} of $B$.
The process of outamalgamation may be iterated at most finitely many times before no further amalgamations can occur, and the resulting matrix is called a \emph{total amalgamation}.
Every square $\N$-matrix with no zero rows admits a total amalgamation which is in fact unique up to conjugation by a permutation matrix.
Moreover, $A$ and $B$ determine edge shifts that are one-sided conjugate if and only if their total amalgamations agree (up to conjugation by a permutation matrix).

Similarly, $B$ is an \emph{insplit} of $A$ if there are a division matrix $D$ and an $\N$-matrix $E$ such that $A = ED^t$ and $B = D^t E$,
and if this is the case, we say $A$ is an \emph{inamalgamation} of $B$.
The process of insplitting does not preserve the one-sided conjugacy class of the edge shifts in general.
As in \cite{Brix2022}, we say $A$ and $B$ are \emph{balanced elementary strong shift equivalent} if there are a division matrix $D$ and rectangular $\N$-matrices $R_A$ and $R_B$ such that 
\[
A = D^t R_A, \quad B= D^t R_B,\quad R_A D^t = R_B D^t.
\]
This means that both $A$ and $B$ are insplits of a common matrix by the same (transposed) division matrix.
We let \emph{balanced strong shift equivalence} be the equivalence relation generated by balanced elementary strong shift equivalence and outsplits.

\section{Unital shift equivalence}
\label{sec:SE+}

In this section we discuss shift equivalence of matrices and introduce unital shift equivalence.
We refer the reader to \cite{Lind-Marcus} for general background on symbolic dynamics, and specifically Chapter 7 where shift equivalence is discussed.
For a recent general review of shift equivalence, we refer to \cite{Boyle-Schmieding}.

A square $\N$-matrix $A$ defines a linear map $\Z^{|A|} \to \Z^{|A|}$ where the matrix acts on column vectors.
The \emph{dimension group} of $A$ is the limit of the stationary inductive system
\[
  \Z^{|A|} \overset{A^t}{\to} \Z^{|A|} \overset{A^t}{\to} \cdots 
\]
where $A^t$ is the transpose of $A$.
The limit is isomorphic to the group 
\[
  G_A \coloneq (\Z^{|A|}\times \N)/\sim,
\]
where $(v,k)\sim (v',k')$ if and only if there exists $l\in \N$ such that $(A^t)^{l-k}v = (A^t)^{l-k'}v'$.
This group carries a natural positive cone of elements $[v,k]$ where $v\in \N^{|A|}$.
Multiplication by $A^t$ defines an automorphism $\theta_A$ on $G_A$ that preserves the positive cone.
This defines a $\Z[x,x^{-1}]$-module structure on $G_A$, and we refer to the triple $(G_A, G_A^+,\theta_A)$ as the \emph{dimension data} of $A$.
If $\underline 1 \coloneq (1,\cdots, 1)^t$ denotes the column vector consisting only of $1$'s, then $u_A \coloneq [\underline 1,0]$ defines an order unit in $G_A$.
We refer to the quadruple $(G_A, G_A^+,\theta_A, u_A)$ as the \emph{unital dimension data}.
Given $A$ and $B$, their unital dimension data are said to be isomorphic if there is a group isomorphism $\Theta\colon G_A \to G_B$ that preserves the positive cone, intertwines the canonical automorphisms $\theta_A$ and $\theta_B$,
and satisfies $\Theta[\underline 1, 0]_A = [\underline 1,0]_B$.

A pair of square $\N$-matrices $A$ and $B$ are \emph{shift equivalent} if there are $\ell\in \N$ (the lag) and rectangular $\N$-matrices $R$ and $S$ satisfying
\[
  A^\ell = RS,\quad B^\ell = SR, \quad AR=RB, \quad BS=SA.
\]
A shift equivalence $(R,S)$ defines an isomorphism of the dimension groups $\hat{R}\colon G_A \to G_B$ given by
\begin{equation} \label{eq:R-dimension-group}
  \hat{R}([v,k]) = [R^tv,k],
\end{equation}
for all $[v,k]\in G_A$,
and this preserves the positive cone and it intertwines the automorphisms $\theta_A$ and $\theta_B$.

\begin{definition} \label{def:SE+}
Let $A$ and $B$ be square $\N$-matrices.
We say a shift equivalence $(R,S)$ between $A$ and $B$ is \emph{unital} if the induced map $\hat{R}\colon G_A \to G_B$ from \eqref{eq:R-dimension-group} preserves the unit 
in the sense that $\hat{R}(u_A) = \hat{B}^k u_B$, for some $k\in \N$, equivalently, $(B^t)^mR^t \underline{1} = (B^t)^{m+k} \underline{1}$ for some $m, k \in \N$.
We say $A$ and $B$ are \emph{unitally shift equivalent}, if there exists a unital shift equivalence from $A$ to $B$.
\end{definition}

\begin{proposition} \label{prop:SE+-dimension-data}
    Let $A$ and $B$ be square $\N$-matrices.  Then, $A$ and $B$ are unitally shift equivalent if and only if their unital dimension data are isomorphic.
\end{proposition}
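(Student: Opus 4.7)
The plan is to reduce to the classical Williams--Krieger correspondence (see \cite[Chapter 7]{Lind-Marcus} or \cite{Boyle-Schmieding}) between shift equivalence of $\N$-matrices and isomorphism of their non-unital dimension data $(G,G^+,\theta)$, and then to track the order unit through both directions. For the forward implication, suppose $(R,S)$ is a unital shift equivalence so that $\hat R(u_A) = \theta_B^k(u_B)$ for some $k \in \N$. The induced map $\hat R \colon G_A \to G_B$ is already an order-preserving group isomorphism intertwining $\theta_A$ with $\theta_B$. Furthermore, any element of $G_B^+$ admits a representative $[v,m]$ with $v \in \N^{|B|}$, and $\theta_B^{-1}([v,m]) = [v,m+1]$ remains of the same form, so $\theta_B^{-1}$ preserves the positive cone. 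Hence $\Theta \coloneq \theta_B^{-k} \circ \hat R$ is an order-preserving isomorphism that intertwines the canonical automorphisms, and satisfies
\[
  \Theta(u_A) \;=\; \theta_B^{-k}\bigl(\theta_B^k(u_B)\bigr) \;=\; u_B,
\]
yielding the required isomorphism of unital dimension data.

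For the converse, suppose $\Theta \colon G_A \to G_B$ is an isomorphism of unital dimension data. I would invoke the classical construction that turns $\Theta$ into a shift equivalence: for each standard basis vector $e_i \in \Z^{|A|}$, write $\Theta([e_i, 0]_A) = [w_i, n]_B$ with a common index $n \in \N$ and columns $w_i \in \N^{|B|}$ (achieved by pushing each image representative far enough along the direct system using $\theta_B^{-1}$), and define $R$ to be the $|A| \times |B|$ $\N$-matrix whose $i$-th row is $w_i$; a symmetric procedure applied to $\Theta^{-1}$ produces $S$. After the standard adjustment by powers of $A$ and $B$---needed to promote the dimension-group identities $\hat R \hat S = \theta_B^{n+n'}$ and $\hat S \hat R = \theta_A^{n+n'}$ into matrix equalities $SR = B^\ell$, $RS = A^\ell$, $AR = RB$, and $BS = SA$---one obtains a shift equivalence $(R,S)$ whose induced map satisfies $\hat R = \theta_B^n \circ \Theta$. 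The unital refinement is then a one-line check:
\[
  \hat R(u_A) \;=\; \theta_B^n\bigl(\Theta(u_A)\bigr) \;=\; \theta_B^n(u_B),
\]
matching \cref{def:SE+} with $k = n$.

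The main obstacle is the converse direction's reliance on the classical existence theorem, in particular the adjustment step that passes from the dimension-group identities to the four matrix-level equations. Once the shift equivalence $(R,S)$ is produced with $\hat R = \theta_B^n \circ \Theta$, the unital tracking is purely formal; the forward direction, by contrast, amounts to renormalizing $\hat R$ by the order automorphism $\theta_B^{-k}$ so that the unit is preserved on the nose.
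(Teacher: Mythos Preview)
Your proposal is correct and follows the same approach as the paper. The paper invokes Krieger's theorem \cite{Kr-shift} (also \cite[Theorem 6.4]{Effros1979}) as a black box to produce from $\Theta$ a shift equivalence $(R,S)$ with $\hat R = \hat B^{k}\circ\Theta$, and then reads off $\hat R(u_A)=\hat B^{k}u_B$; you instead sketch the construction behind that theorem and arrive at the same relation $\hat R=\theta_B^{n}\circ\Theta$, after which the unit tracking is identical. The only cosmetic difference is that the paper treats the forward implication as trivial, whereas you spell out the renormalisation $\Theta=\theta_B^{-k}\circ\hat R$ and verify that $\theta_B^{-1}$ preserves $G_B^+$.
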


\begin{proof}
    The only nontrivial statement is if the unital dimension data are isomorphic, then $A$ and $B$ are unitally shift equivalent.  
    Suppose $\Theta \colon G_A \to G_B$ is an isomorphism of the unital dimension data.
    By Krieger's Theorem \cite{Kr-shift} (see also \cite[Theorem 6.4]{Effros1979}), there exist $k \in \N$ and a shift equivalence $(R,S)$ between $A$ and $B$ such that $\hat{B}^k \circ \Theta = \hat{R}$. 
    But then $\hat{R}_Au_A = \hat{B}^k u_B$, so $(R,S)$ is a unital shift equivalence.
\end{proof}

\begin{remark}
It is wellknown that for primitive matrices, shift equivalence is characterised by the dimension group and its canonical automorphism (not including the positive cone) \cite[Corollary 7.5.9]{Lind-Marcus}; similarly, unital shift equivalence is characterised by the unital dimension group with its canonical automorphism for primitive matrices.    
\end{remark}

Since one-sided conjugacy of edge shifts is generated by outsplits and outamalgamations of the adjacency matrices, the observation below shows that one-sided conjugate edge shifts have unitally shift equivalent matrices.
The converse is not the case and we give (several) examples of this.

\begin{lemma} \label{lem:one-sided-conjugacy-SE+}
    Let $A$ and $B$ be a square $\N$-matrix with no zero rows, and suppose $B$ is an outsplit of $A$.
    Then, $A$ and $B$ are unitally shift equivalent.
\end{lemma}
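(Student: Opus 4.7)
The plan is to produce an explicit unital shift equivalence of lag $1$ directly from the outsplit data. Since $B$ is an outsplit of $A$, we have a division matrix $D$ and an $\N$-matrix $E$ with $A = DE$ and $B = ED$. I would take $R = D$ and $S = E$ as the candidate shift equivalence.

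First, I would verify the four defining equations for a shift equivalence of lag $\ell = 1$: $A = DE = RS$ and $B = ED = SR$ hold by definition, while $AR = (DE)D = D(ED) = DB = RB$ and $BS = (ED)E = E(DE) = EA = SA$ follow by associativity. So $(R,S) = (D,E)$ is a shift equivalence between $A$ and $B$.

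Next, I would check that this shift equivalence is unital. Referring to \cref{def:SE+}, it suffices to show $R^t \underline{1} = \underline{1}$, i.e. $D^t \underline{1}_{|A|} = \underline{1}_{|B|}$, after which the equation $(B^t)^m R^t \underline{1} = (B^t)^{m+k} \underline{1}$ is satisfied with $m = k = 0$. This is where the structure of a division matrix is used: the $i$-th entry of $D^t \underline{1}_{|A|}$ is precisely the sum of the entries in the $i$-th column of $D$, and each column of a division matrix contains exactly one $1$. Hence $D^t \underline{1}_{|A|} = \underline{1}_{|B|}$, giving $\hat{R}(u_A) = u_B$, and $(D,E)$ is a unital shift equivalence.

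There is no substantial obstacle; the content of the lemma is the observation that the division matrix in the outsplit factorisation automatically delivers a shift equivalence of lag $1$ that preserves the order unit. The only remark worth making is that one does not need $\N$-matrices $D$ and $E$ beyond what the outsplit definition supplies, and that the condition ``no zero rows'' ensures the dimension data is well-behaved but is not actually used in the verification itself.
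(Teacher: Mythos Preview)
Your proposal is correct and follows essentially the same approach as the paper: take $(R,S)=(D,E)$ as a lag~$1$ shift equivalence and use the division-matrix property that each column of $D$ contains exactly one $1$ to conclude $D^t\underline 1=\underline 1$, hence $\hat R(u_A)=u_B$. The only difference is that you spell out the associativity check and the column-sum computation in more detail than the paper does.
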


\begin{proof}
    If $B$ is the result of an outsplit of $A$, then there are rectangular $\{0,1\}$-matrices $D$ and $E$ satisfying $A=DE$ and $B=ED$, see e.g. \cite[Theorem 2.4.12]{Lind-Marcus}.
    The pair $(D,E)$ is a shift equivalence, and $D$ has the property that every row of $D^t$ contains exactly one $1$.
    Therefore, $D^t \underline 1 = \underline 1$ from which we see that $(D,E)$ is a unital shift equivalence.
\end{proof}

The example below shows that insplits need not preserve the unital shift equivalence class of a matrix.
In particular, strong shift equivalence does not imply unital shift equivalence.

\begin{example}
\begin{enumerate}[label=(\arabic*)]
    \item For the matrix $A=(2)$, 
    \[
    \lambda \colon (G_A, G_A^+, \theta_A, u_A)\cong \left( \Z\left[\frac{1}{2}\right], \Z\left[\frac{1}{2}\right]^+, \operatorname{mult}_2, 1\right),
    \]
    where $\operatorname{mult}_2$ is the multiplication by $2$ automorphism on the dyadic rationals $\Z[\frac{1}{2}]$.

    \item The matrix 
    \[
    B \coloneq \begin{pmatrix}
        1 & 1 \\
        1 & 1
    \end{pmatrix}
    \]
    is an outsplit of $A$ via $R = \begin{pmatrix} 1 & 1 \end{pmatrix}$ and $S = \begin{pmatrix} 1 \\ 1 \end{pmatrix}$, which implies 
    \[
    (G_B, G_B^+, \theta_B, u_B)\cong \left( \Z\left[\frac{1}{2}\right], \Z\left[\frac{1}{2}\right]^+, \operatorname{mult}_2, 2\right), \ [ v,k ] \mapsto \lambda([ S^t v , k ])
    \]
    Since $2\in \Z[\frac{1}{2}]$ is in the orbit of $\operatorname{mult}_2$, the unital dimension data of $A$ and $B$ are isomorphic.  By \cref{prop:SE+-dimension-data}, $A$ and $B$ are unitally shift equivalent.
    \item The matrix
    \[
    C \coloneq \begin{pmatrix}
        1 & 0 & 1 \\
        1 & 0 & 1 \\
        0 & 1 & 1 
    \end{pmatrix}
    \]
    in an insplit of $B$ via 
    \[
    R \coloneq \begin{pmatrix}
        1 & 0 & 1 \\
        0 & 1 & 1 
    \end{pmatrix}\qquad \textup{and} \qquad 
    S \coloneq \begin{pmatrix}
        1 & 0 \\
        1 & 0 \\
        0 & 1
    \end{pmatrix},
    \]
    which implies 
    \[
    (G_C, G_C^+, \theta_C, u_C)\cong \left( \Z\left[\frac{1}{2}\right], \Z\left[\frac{1}{2}\right]^+, \operatorname{mult}_2, 3\right), [ v, k ] \mapsto \lambda([ \begin{pmatrix} 1 & 1 \end{pmatrix} S^t v, k]).
    \]
    Note that there is no automorphism of $\Z[\frac{1}{2}]$ that sends $1$ to $3$.  Thus, the unital dimension data of $C$ is not isomorphic to the unital dimension data of $A$.  By \cref{prop:SE+-dimension-data}, $C$ is not unitally shift equivalent to $B$ (or $A$).
\end{enumerate}
\end{example}

\begin{remark}
A pointed version of Hazrat's graded classification conjecture for Leavitt path algebras \cite{Hazrat2013} predicts that the pointed graded $K_0$-group of the Leavitt path algebra of finite graphs with no sinks are isomorphic if and only if the Leavitt path algebras are graded isomorphic (see also \cref{cor:LPA-isomorphism}).
The pointed graded $K_0$-group coincides with the unital dimension data we consider here for the underlying adjacency matrices, so our notion of unital shift equivalence provides a matrix aspect of this conjecture.
In our previous work \cite{Brix-DorOn-Hazrat-Ruiz} with Dor-On and Hazrat, we discussed a slightly stronger version of unital shift equivalence (see Section 2.1 in that paper) 
where we said a shift equivalence $(R,S)$ is unital, if $\hat{R}(u_A) = u_B$.  We were able to prove that the Leavitt path algebras of finite graphs with no sinks are isomorphic provided that their adjacency matrices are unital shift equivalent (in the sense of \cite{Brix-DorOn-Hazrat-Ruiz}) and they satisfy an alignment condition (see \cite[Definitions~5.2 and 5.4]{Brix-DorOn-Hazrat-Ruiz}). We choose this slightly more flexible unital shift equivalence (\cref{def:SE+}) in order to establish that unital shift equivalence coincides with having isomorphic unital dimension data.  

There are also C*-algebraic versions of Hazrat's conjecture concerning gauge-equivariant isomorphism of graph C*-algebras, and this is e.g. studied in \cite{Eilers-Ruiz} where the unital dimension data appears as the dimension quadruple.
See also \cite{Bratteli-Kishimoto}.
\end{remark}

The \emph{unital signed Bowen--Franks group} of $A$ is the cokernel $\BF(A) \coloneq \Z^{|A|}/(I-A^t)\Z^{|A|}$ together with the class of the vector $\underline{1}\in \Z^{|A|}$ in the cokernel
and the sign of the determinant $\sign \det I - A^t$.

\begin{lemma}
    The unital Bowen--Franks group is an invariant of unital shift equivalence.
\end{lemma}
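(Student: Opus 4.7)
The plan is to verify separately that each of the three pieces of the unital signed Bowen--Franks group---the underlying abelian group $\BF(A)$, the distinguished class $[\underline{1}]$, and the sign of $\det(I - A^t)$---is preserved under a unital shift equivalence $(R, S)$ of lag $\ell$ between $A$ and $B$.

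For the abelian group, I would follow the usual argument. Transposing the intertwining relations $AR = RB$ and $BS = SA$ yields $R^t A^t = B^t R^t$ and $S^t B^t = A^t S^t$, so $R^t(I - A^t) = (I - B^t) R^t$ and symmetrically for $S^t$. Hence $R^t$ and $S^t$ descend to group homomorphisms $\bar R \colon \BF(A) \to \BF(B)$ and $\bar S \colon \BF(B) \to \BF(A)$. Since $A^t v - v = -(I - A^t)v$, the map $A^t$ acts as the identity on $\BF(A)$, and likewise $B^t$ acts as the identity on $\BF(B)$. The composition $\bar S \bar R$ is induced by $S^t R^t = (RS)^t = (A^t)^\ell$, so it equals the identity, and similarly $\bar R \bar S = \id$. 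Thus $\bar R$ is an isomorphism.

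To see that $\bar R$ preserves the distinguished class, I would use the defining condition of \cref{def:SE+}: there exist $m, k \in \N$ with $(B^t)^m R^t \underline{1} = (B^t)^{m+k} \underline{1}$. Reducing this identity modulo $(I - B^t)\Z^{|B|}$ and using that $B^t$ acts trivially on $\BF(B)$ gives $[R^t \underline{1}] = [\underline{1}]$ in $\BF(B)$, which is exactly the preservation of the unit class. This is the only place where the unital hypothesis is actually needed; the remaining pieces follow from ordinary shift equivalence.

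For the sign of the determinant, I would appeal to the classical fact that shift equivalent matrices have the same Jordan form away from $0$ (see \cite[Theorem~7.4.17]{Lind-Marcus}). This means that their characteristic polynomials factor as $p_A(x) = x^a q(x)$ and $p_B(x) = x^b q(x)$ for a common polynomial $q$ with $q(0) \neq 0$, so $\det(I - A^t) = p_A(1) = q(1) = p_B(1) = \det(I - B^t)$ and in particular their signs coincide. The main external ingredient is this spectral fact; the rest of the argument is bookkeeping about cokernels, and there is no serious obstacle.
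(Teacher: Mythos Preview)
Your proof is correct and follows essentially the same approach as the paper: both show that $R^t$ descends to an isomorphism of Bowen--Franks groups (with inverse induced by $S^t$, using that $A^t$ and $B^t$ act as the identity on the cokernels), use the unital hypothesis to preserve the class of $\underline 1$, and appeal to Lind--Marcus for the sign invariance. The only cosmetic difference is that the paper routes the unit-preservation step through a quotient map $q_A\colon G_A\to\BF(A)$ and the compatibility $q_B\circ\hat R=\check R\circ q_A$, whereas you reduce the defining matrix identity $(B^t)^mR^t\underline 1=(B^t)^{m+k}\underline 1$ modulo $(I-B^t)\Z^{|B|}$ directly.
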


\begin{proof}
It is wellknown that the sign of the determinant is invariant under shift equivalence (this follows e.g. from \cite[Corollary 7.4.12]{Lind-Marcus}).

There is a canonical map $q_A\colon G_A \to \BF(A)$ given by $q_A([v,k]) = v + (I-A^t)\Z^{|A|}$, for all $[v,k] \in G_A$.
In particular, $q_A([\underline 1,0]) = \underline 1 + (I-A^t)\Z^{|A|}$.
A shift equivalence $(R,S)$ from $A$ to $B$ induces a group isomorphism $\check{R}\colon \BF(A) \to \BF(B)$ given by
\begin{equation} \label{eq:R-Bowen-Franks}
    \check{R}(v+ (I-A^t)\Z^{|A|}) = R^tv + (I-B^t)\Z^{|B|},
\end{equation}
for all $v\in \Z^{|A|}$ with inverse $\check{S}( w + (I-B^t)\Z^{|B|} ) = S^tw+ (I-A^t)\Z^{|A|}$ as $A^t$ and $B^t$ induce identity maps on the Bowen--Franks groups of $A$ and $B$, respectively.
An easy computation shows that 
\begin{equation} \label{eq:R-hat-check}
q_B\circ \hat{R} = \check{R}\circ q_A.
\end{equation}
In particular, if $(R,S)$ is unital, then $\check{R}$ respects the classes of $\underline 1$ in the Bowen--Franks groups.
\end{proof}

\begin{example}
For integers $k\geq 2$, the matrices below 
\[
A_k =
\begin{pmatrix}
    1 & k \\
    k-1 & 1
\end{pmatrix}\quad \textup{and} \quad
B_k = 
\begin{pmatrix}
    1 & (k-1)k \\
    1 & 1
\end{pmatrix},
\]
are known to be shift equivalent,
and it is an important open problem to determine if they are strong shift equivalent (only the cases $k=2,3$ are confirmed to be strong shift equivalent), see \cite[Example 7.3.13]{Lind-Marcus}.
The matrix $P_k = \begin{pmatrix} k-1 & k \\ 1 & 1 \end{pmatrix}$ implements a similarity over $\Z$ of $A_k$ and $B_k$,
and a concrete shift equivalence is given as $R = P_k^{-1} B_k^j$ and $S = B_k P_k A_k^j$, for some $j>1$.
Observe that $(B_k)^t (P_k^{-1})^t\underline 1 = \underline 1$, so $R^t \underline 1 = (B_k^{j-1})^t \underline 1$.
It follows that $(R,S)$ is a unital shift equivalence between $A_k$ and $B_k$.
\end{example}

We give more examples of matrices that are unitally shift equivalent in the next section.

\section{A discrepancy in one-sided eventual conjugacy}
\label{sec:discrepancy}

In this section, we discuss the connections between unital shift equivalence, Matsumoto's eventual conjugacy, and having (one-sided) conjugate higher powers. 
The latter two notions are both called \emph{eventual conjugacy} in the literature, so one main point of this section is that all three notions are in fact distinct.
This is in contrast to the two-sided case, where shift equivalence coincides with having (two-sided) conjugate higher powers.

Matsumoto \cite{Matsumoto2017} introduces \emph{eventual conjugacy} for one-sided shifts of finite type.
Two one-sided shifts of finite type $\sigma_A$ and $\sigma_B$ are \emph{Matsumoto eventually conjugate} if there exist a homeomorphism $h\colon X_A \to X_B$ and an integer $\ell\in \N$ satisfying
\begin{align}
    \sigma_B^{\ell+1}\circ h(x) = \sigma_B^\ell \circ h(\sigma_A(x)), \\
    \sigma_A^{\ell+1}\circ h^{-1}(y) = \sigma_A^\ell \circ h^{-1}(\sigma_B(y)),
\end{align}
for all $x\in X_A$ and $y\in X_B$.

In \cite{Brix2022}, it is shown that Matsumoto's eventual conjugacy is generated at the level of directed graphs by out-splits and balanced in-split,
and an algebraic characterisation called balanced strong shift equivalence of Matsumoto's eventual conjugacy is established.
It is also argued that Matsumoto's eventual conjugacy should be viewed as the one-sided analogue of two-sided conjugacy (strong shift equivalence).

\begin{proposition}\label{prop:111->SE+}
  Let $A$ and $B$ be square $\N$-matrices.
  If $A$ and $B$ are balanced strong shift equivalent, then $A$ and $B$ are unitally shift equivalent.
\end{proposition}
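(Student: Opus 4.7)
The plan is to reduce the statement to checking the two types of moves that generate balanced strong shift equivalence, namely outsplits and balanced elementary strong shift equivalences. Since \cref{prop:SE+-dimension-data} identifies unital shift equivalence with isomorphism of unital dimension data, unital shift equivalence is itself an equivalence relation on square $\N$-matrices, so it suffices to verify that each generating move produces a unital shift equivalence.

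For outsplits, this is exactly \cref{lem:one-sided-conjugacy-SE+}, so nothing new is needed there.

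For a balanced elementary strong shift equivalence with data $A = D^t R_A$, $B = D^t R_B$, and $R_A D^t = R_B D^t$, the plan is to read off a lag-$2$ unital shift equivalence directly from the balance condition. Concretely, I would compute
\[
AB \;=\; D^t R_A D^t R_B \;=\; D^t (R_B D^t) R_B \;=\; B^2,
\]
and symmetrically $BA = A^2$. Setting $(R, S) = (B, A)$, these two identities immediately give the four shift equivalence equations: $AR = AB = B^2 = RB$, $BS = BA = A^2 = SA$, $RS = BA = A^2$, and $SR = AB = B^2$. Unitality is then automatic, since $R^t \underline 1 = B^t \underline 1$ forces $\hat R(u_A) = [B^t \underline 1, 0]_B = \hat B(u_B)$, giving $k = 1$ in the formulation of \cref{def:SE+}.

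I do not anticipate a serious obstacle beyond spotting the commutation identities $AB = B^2$ and $BA = A^2$ forced by the balance condition $R_A D^t = R_B D^t$. Once these are in hand, the canonical choice $R = B$ and $S = A$ makes both the shift equivalence conditions and the unitality condition a matter of direct substitution.
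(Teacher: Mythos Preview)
Your proposal is correct and follows essentially the same route as the paper: reduce to the generating moves, cite \cref{lem:one-sided-conjugacy-SE+} for outsplits, and for a balanced elementary strong shift equivalence take $(R,S)=(B,A)$ as a lag-$2$ shift equivalence, which is unital. The paper records this more tersely (it simply asserts that $(B,A)$ is a lag-$2$ unital shift equivalence), whereas you spell out the identities $AB=B^2$ and $BA=A^2$ and the unitality computation explicitly, but the argument is the same.
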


\begin{proof}
    Balanced strong shift equivalence is generated by one-sided outsplits (which we know preserve the unital dimension data by \cref{lem:one-sided-conjugacy-SE+}) and balanced elementary equivalence, so it suffices to assume that $A$ and $B$ are balanced elementary equivalent. 
    This means that there are rectangular $\N$-matrices $S$, $R_A$, and $R_B$ satisfying
    \[
    A = SR_A,\qquad B = SR_B, \qquad R_A S = R_B S.
    \]
    Observe that $(B,A)$ defines a shift equivalence from $A$ to $B$ of lag $2$ which is clearly unital.
\end{proof}

Since balanced strong shift equivalence is an algebraic characterisation of Matsumoto's one-sided eventual conjugacy, the proposition shows that if one-sided shifts of finite type are Matsumoto eventually conjugate, then the matrices are unitally shift equivalent.
This can also be proven using C*-algebras through Carlsen and Rout's theorem \cite[Theorem 4.1]{Carlsen-Rout} for Matsumoto's eventual conjugacy.

At the level of graphs we have the following corollary.

\begin{corollary}
Let $E$ be a finite graph with no sinks.
If $F$ is either an out-split or a balanced in-split of $E$, then the adjacency matrices of $E$ and $F$ are unitally shift equivalent.
\end{corollary}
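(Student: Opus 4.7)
The plan is to reduce this corollary directly to the two matrix-level results already established in this section, namely \cref{lem:one-sided-conjugacy-SE+} and \cref{prop:111->SE+}. Let $A$ and $B$ denote the adjacency matrices of $E$ and $F$, respectively; since $E$ has no sinks, $A$ has no zero rows, which places us in the hypothesis of both results.

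I would then split into the two cases of the statement. First, suppose $F$ is an out-split of $E$. At the level of adjacency matrices, this is exactly the statement that $B$ is an outsplit of $A$ in the sense of \cref{sec:prelim}, i.e.\ $A=DE'$ and $B=E'D$ for a division matrix $D$ and some $\N$-matrix $E'$. \cref{lem:one-sided-conjugacy-SE+} then gives immediately that $A$ and $B$ are unitally shift equivalent.

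Next, suppose $F$ is a balanced in-split of $E$. By the graph-theoretic interpretation of the algebraic definition recorded in \cref{sec:prelim}, this is precisely the assertion that $A$ and $B$ are balanced elementary strong shift equivalent, i.e.\ that there are a division matrix $D$ and $\N$-matrices $R_A,R_B$ with $A=D^tR_A$, $B=D^tR_B$, and $R_AD^t=R_BD^t$. In particular $A$ and $B$ are balanced strong shift equivalent, so \cref{prop:111->SE+} applies and yields that $A$ and $B$ are unitally shift equivalent.

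I do not anticipate any genuine obstacle: the proof is essentially bookkeeping, consisting of translating the two graph-level moves (out-split, balanced in-split) into their matrix-level incarnations (outsplit, balanced elementary strong shift equivalence) so that the previously proved results can be invoked. The only point to be careful about is confirming that the hypothesis of no sinks survives both operations, which it does since the adjacency matrix of $F$ inherits the absence of zero rows from that of $E$ in each case.
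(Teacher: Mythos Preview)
Your proposal is correct and matches the paper's intent: the paper presents this corollary immediately after \cref{prop:111->SE+} with no explicit proof, treating it as a direct graph-level restatement of the preceding matrix-level results. Your case split, invoking \cref{lem:one-sided-conjugacy-SE+} for out-splits and \cref{prop:111->SE+} for balanced in-splits via balanced elementary strong shift equivalence, is precisely the argument the paper leaves implicit.
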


The next example shows that the converse of \cref{prop:111->SE+} does not hold.
It is a modification of Kim and Roush's famous counterexample to the Williams conjecture from \cite{Kim-Roush99}.
The details are provided in \cite{Eilers-Ruiz}, and we thank Eilers for allowing us to reproduce the example here.

\begin{example}
Unital shift equivalence does not imply (balanced) strong shift equivalence.
Consider Kim and Roush's counterexamples to the Williams conjecture
\[
A = 
\begin{pmatrix}
    0 & 0 & 1 & 1 & 3 & 0 & 0 \\
    1 & 0 & 0 & 0 & 3 & 0 & 0 \\
    0 & 1 & 0 & 0 & 3 & 0 & 0 \\
    0 & 0 & 1 & 0 & 3 & 0 & 0 \\
    0 & 0 & 0 & 0 & 0 & 0 & 1 \\
    1 & 1 & 1 & 1 & 10 & 0 & 0 \\
    1 & 1 & 1 & 1 & 0 & 1 & 0 
\end{pmatrix} \quad \textup{and} \quad
B = 
\begin{pmatrix}
    0 & 0 & 1 & 1 & 3 & 0 & 0 \\
    0 & 0 & 1 & 1 & 0 & 0 & 0 \\
    0 & 0 & 1 & 1 & 0 & 0 & 0 \\
    0 & 0 & 1 & 1 & 0 & 0 & 0 \\
    0 & 0 & 0 & 0 & 0 & 0 & 1 \\
    4 & 5 & 6 & 3 & 10 & 0 & 0 \\
    4 & 5 & 6 & 3 & 0 & 1 & 0 
\end{pmatrix}.
\]
There are $7\times 7$ matrices $R$ and $S$ that implement a concrete shift equivalence of lag 13 from $A$ to $B$, and the deep result of \cite{Kim-Roush99} is that $A$ and $B$ are \emph{not} strong shift equivalent.
By setting
\[
B' \coloneq 
\begin{pmatrix}
    0 & v \\
    0 & B
\end{pmatrix}, \qquad v \coloneq \underline 1^t RB - \underline 1^t B,
\]
Eilers and Ruiz find matrices $R_1$ and $S_1$ that implement a \emph{unital} shift equivalence from $A$ to $B'$.
Since $B$ and $B'$ are strong shift equivalent, it follows that $A$ and $B'$ are \emph{not} strong shift equivalent, in particular, not balanced strong shift equivalent.
It follows that the converse to \cref{prop:111->SE+} does not hold.
\end{example}

Next, let $A$ be a square $\N$-matrix.
The $i$'th higher power of the one-sided shift of finite type $(\sigma_A, X_A)$ is the system $(\sigma_A^i,X_A)$,
and the higher power $\sigma_A^i$ is one-sided conjugate to the edge shift $(\sigma_{A^i},X_{A^i})$.


\begin{definition}
  Let $A$ and $B$ be square $\N$-matrices wit no zero rows.
  The one-sided edge shifts $(\sigma_A, X_A)$ and $(\sigma_B, X_B)$ have \emph{conjugate higher powers} if all but finitely many powers of $\sigma_A$ and $\sigma_B$ are one-sided conjugate.
\end{definition}

In~\cite{Boyle-Fiebig-Fiebig}, Boyle, D.~Fiebig, and U.~Fiebig study one-sided shifts of finite type that have conjugate higher powers
(they refer to this as one-sided eventual conjugacy).
We present here the main result related to this notion from \cite[Section 8]{Boyle-Fiebig-Fiebig}.
An immediate consequence of this theorem is that having conjugate higher powers is decidable.

\begin{theorem} \label{thm:BFF}
  Let $A$ and $B$ be square $\N$-matrices with no zero rows and let $n \geq \max\{ |A|, |B| \}$.
  Then, $A$ and $B$ determine one-sided shifts of finite type that have conjugate higher powers if and only if 
  the total amalgamations of $A^m$ and $B^m$ agree (up to the same permutation) for $m = n, n+1$.
\end{theorem}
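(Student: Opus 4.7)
My plan reduces the theorem to a statement about partitions of vertex sets by invoking the characterization that two $\N$-matrices with no zero rows define one-sided conjugate edge shifts exactly when their total amalgamations coincide up to a common permutation. Since the edge shift of $A^m$ is one-sided conjugate to $\sigma_A^m$, having conjugate higher powers is equivalent to the total amalgamations $T(A^m)$ and $T(B^m)$ agreeing, up to permutation, for all but finitely many $m$. The theorem then reduces to showing that this eventual agreement is both produced by, and witnessed at, the two specific exponents $m = n, n+1$.

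I would begin by replacing $A$ and $B$ by their total amalgamations; this preserves both $T(A^m)$ and $T(B^m)$ for every $m$ and only shrinks the relevant sizes, so the bound $n \geq \max\{|A|,|B|\}$ remains in force. For a totally amalgamated matrix $C$ of size $k$, I would encode $T(C^m)$ via a partition $\pi^C_m$ of $\{1,\dots,k\}$ whose blocks are the vertices identified by the total amalgamation of $C^m$. Each $\pi^C_m$ is the follower-set equivalence of the graph of $C^m$, and can be extracted in at most $k$ rounds of partition refinement inside $C^m$.

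The crux of the proof is a stabilization lemma: once $\pi^C_n = \pi^C_{n+1}$ for some $n \geq |C|$, one has $\pi^C_m = \pi^C_n$ for every $m \geq n$. The underlying mechanism is that the refinement producing $\pi^C_{m+1}$ from $\pi^C_m$ factors through a single additional application of the one-step transition of $C$, and any distinguishing witness between two vertices that could arise beyond depth $|C|$ must already be visible within the first $|C|$ steps. Demanding the \emph{same} permutation to realize the identifications at both $m=n$ and $m=n+1$ is what synchronizes the stabilization processes for $A$ and $B$, so that the two-step check actually detects having conjugate higher powers rather than a coincidental matching of block counts.

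Granted the lemma, both implications follow cleanly. For sufficiency, agreement at $m=n,n+1$ under a common permutation $\tau$ propagates $\tau$-equivariantly to all $m\geq n$ by applying the lemma separately to $A$ and $B$. For necessity, if $\sigma_A^m$ and $\sigma_B^m$ are conjugate for all $m\geq N$, the lemma forces $\pi^A_m$ and $\pi^B_m$ to be constant from $n\geq \max\{|A|,|B|\}$ onward, so the common permutation witnessing eventual agreement descends to $m=n,n+1$. I expect the main obstacle to lie in justifying the stabilization lemma with the specific cutoff $|C|$; this rests on a careful accounting of the depth at which follower-set distinctions in $A^m$ can originate from the single-step structure of $A$.
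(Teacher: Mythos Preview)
The paper does not give its own proof of this theorem; it is quoted from \cite[Section~8]{Boyle-Fiebig-Fiebig} as an imported result. So there is no in-paper argument to compare your outline against, only the original source.

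Your plan is in the right spirit---reduce to tracking the total-amalgamation partitions $\pi^C_m$ and argue they stabilise---but the stabilisation lemma you formulate is not strong enough to close either direction, and this is a genuine gap rather than a detail. Your lemma has the hypothesis $\pi^C_n = \pi^C_{n+1}$, yet neither hypothesis of the theorem supplies this. For sufficiency, knowing that $T(A^n)\cong T(B^n)$ and $T(A^{n+1})\cong T(B^{n+1})$ via the \emph{same} permutation only tells you the four quotient matrices share a common size; it does not say that $\pi^A_n$ and $\pi^A_{n+1}$ are the same partition of $\{1,\dots,|A|\}$ (two partitions can have equally many blocks without coinciding), so you cannot invoke the lemma ``separately to $A$ and $B$'' as written. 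For necessity, you assert the lemma ``forces $\pi^A_m$ and $\pi^B_m$ to be constant from $n$ onward'', but the lemma only propagates \emph{after} you already know two consecutive partitions agree; you still owe an independent reason why that ever happens, let alone by $m=\max\{|A|,|B|\}$. Note too that the sequence $(\pi^C_m)_m$ is not monotone in general (already in \cref{ex:BFF} the partition jumps from nontrivial at $m=1$ to the indiscrete partition at $m=2$), so pigeonhole-style arguments on a refining chain are unavailable.

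What actually does the work in \cite{Boyle-Fiebig-Fiebig} is an unconditional statement: for every $m\geq |C|$ the partition $\pi^C_m$ equals a single fixed equivalence on the vertex set of $C$, so that $T(C^m)$ is simply $C^m$ read on that common quotient. Once you have this, both directions are immediate and the ``same permutation'' condition becomes natural (the quotient vertex sets literally coincide for all $m\geq n$). Your conditional lemma is a corollary of this, but bootstrapping from the corollary alone leaves the circularity above. You correctly flag the stabilisation step as the main obstacle; the point is that you need its unconditional form, and proving that with the specific cutoff $|C|$ is where the substance lies.
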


Boyle, Fiebig and Fiebig~\cite[Theorem 8.3]{Boyle-Fiebig-Fiebig} show that the dimension triple is an invariant for having conjugate higher powers.
In fact, their arguments shows that the unit is also preserved, so the dimension \emph{quadruple} is an invariant of having conjugate higher powers.
Specifically, they define an \emph{images group} to any surjective local homeomorphism,
and this coincides with the dimension data (\cite[Theorem 4.5(a)]{Boyle-Fiebig-Fiebig}) also with the unit although this is not mentioned there.
Hence we have the following.

\begin{proposition} 
  Let $A$ and $B$ be square $\N$-matrices.
  If the one-sided shifts $X_A$ and $X_B$ have conjugate higher powers, then $A$ and $B$ are unitally shift equivalent.
\end{proposition}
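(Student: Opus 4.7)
The plan is to apply Proposition~\ref{prop:SE+-dimension-data}, which reduces the statement to showing that if $X_A$ and $X_B$ have conjugate higher powers, then the unital dimension data of $A$ and $B$ are isomorphic. The ``dimension triple'' part of this assertion --- namely that there is a positive-cone-preserving isomorphism $G_A \to G_B$ intertwining $\theta_A$ with $\theta_B$ --- is already contained in \cite[Theorem~8.3]{Boyle-Fiebig-Fiebig}, so the only remaining task is to verify that their construction also carries $u_A$ to $u_B$.

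For this, I would unpack the identification in \cite[Theorem~4.5(a)]{Boyle-Fiebig-Fiebig}, which realises the dimension data of $A$ as the images group $I_{\sigma_A}$ attached to the local homeomorphism $\sigma_A$. Tracing through the definitions, the unit $u_A = [\underline 1, 0]$ corresponds under this identification to the class of the constant function $1$ on $X_A$. Any conjugacy $h\colon X_A \to X_B$ (in particular, one intertwining $\sigma_A^m$ with $\sigma_B^m$) pulls the constant function $1$ on $X_B$ back to the constant function $1$ on $X_A$, and this is functorial with the BFF construction. Hence the isomorphism of images groups induced by a higher-power conjugacy automatically identifies the distinguished units, upgrading the BFF isomorphism of dimension triples to one of unital dimension data.

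The main obstacle is already resolved inside \cite{Boyle-Fiebig-Fiebig}: a single conjugacy of $\sigma_A^m$ with $\sigma_B^m$ only produces an intertwining of $\theta_A^m$ with $\theta_B^m$, whereas we require an isomorphism intertwining the original $\theta_A$ with $\theta_B$. The resolution, visible also in Theorem~\ref{thm:BFF}, is to exploit two consecutive powers simultaneously and use that $\gcd(n,n+1)=1$ to recover the action of $\theta_A$ itself. With this handled by the BFF machinery, the passage from isomorphism of unital dimension data to unital shift equivalence is the immediate application of Proposition~\ref{prop:SE+-dimension-data}, completing the proof.
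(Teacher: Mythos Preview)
Your proposal is correct and follows essentially the same approach as the paper: the paper's argument (given in the paragraph immediately preceding the proposition rather than in a formal proof environment) likewise invokes \cite[Theorem~8.3]{Boyle-Fiebig-Fiebig} for the dimension triple, observes via the images-group identification \cite[Theorem~4.5(a)]{Boyle-Fiebig-Fiebig} that the unit is also preserved, and then concludes using \cref{prop:SE+-dimension-data}. Your write-up supplies a bit more detail (the unit corresponding to the constant function $1$, and the $\gcd(n,n+1)=1$ point), but the strategy is identical.
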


The example below shows that unital shift equivalence is nontrivial in the sense that it does not coincide with one-sided conjugacy.
The case $k=1$ is from \cite[Example 5.6]{Boyle-Fiebig-Fiebig}, and the details for the general case are in \cite[Example 6.6]{Brix2024}.

\begin{example}\label{ex:BFF}
  For every integer $k\geq 1$, the matrices
  \[
    A_k = 
    \begin{pmatrix}
      2k & 0  & 4k \\
      k  & 2k & 0  \\
      k  & 2k & 0  \\
    \end{pmatrix} \quad \textup{and} \quad
    B_k = (4k)
  \]
  have conjugate higher powers. 
  In particular, they are unitally shift equivalent. 
  The one-sided shifts of finite type that they determine are however not one-sided conjugate as their total amalgamations are distinct.
  The matrices are also balanced strong shift equivalent, see \cite[Example 6.6]{Brix2024}.
\end{example}

Below, we replicate \cite[Example 6.7]{Brix2024} which shows that having conjugate higher powers does not coincide with Matsumoto's eventual conjugacy.

\begin{example} 
  The adjacency matrices of the graphs in~\cite[Example 3.6]{Brix-Carlsen2020} are
  \[
    A = 
    \begin{pmatrix}
      0 & 2 & 2 \\
      1 & 0 & 0 \\
      1 & 0 & 0 
    \end{pmatrix}\quad \textrm{and} \quad
    B = 
    \begin{pmatrix}
      0 & 3 & 1 \\
      1 & 0 & 0 \\
      1 & 0 & 0 
    \end{pmatrix},
  \]
  and the one-sided edge shifts are Matsumoto eventually conjugate, in particular unitally shift equivalent.
  However, the total amalgamations of the powers of $A$ and $B$ do not coincide so they do not have conjugate higher powers. 
\end{example}

We leave the following problems open (see the dashed arrow in the diagram in the introduction).

\begin{question}
  Does having conjugate higher powers imply balanced strong shift equivalence?
  Does it imply strong shift equivalence?
\end{question}

A positive answer to either one of the above questions will resolve the open problem of whether Ashley's matrices below are strong shift equivalent,
since balanced strong shift equivalence implies strong shift equivalence.

\begin{example}\label{ex:Ashley}
Consider Ashley's graph
\[
  \begin{tikzcd}
    & \bullet \arrow[dl] \arrow[loop, looseness=5] & \bullet \arrow[l] \arrow[loop, looseness=5]&           \\
    \bullet \arrow[d] \arrow[rrrd] &         &         & \bullet \arrow[ul]  \arrow[ddl] \\
    \bullet \arrow[dr] \arrow[rrru] &         &         & \bullet \arrow[u] \arrow[lll] \\
    & \bullet \arrow[r] \arrow[uul] & \bullet \arrow[ur] \arrow[l, bend right] &           
  \end{tikzcd}
\]
The characteristic polynomial of the adjacency matrix $A$ of this graph is $x^7(x-2)$ and a result of Williams (see e.g.~\cite[Lemma 2.2.6]{Kitchens}) 
implies that the two-sided edge shift is shift equivalent to the matrix $(2)$, the full $2$-shift.
It is still an open problem to determine whether the two systems are in fact two-sided conjugate.
A computation of the higher powers of the adjacency matrix of Ashley’s
graph shows that all but finitely many of the higher powers are one-sided conjugate to the higher powers of the full 2-shift (using~\cref{thm:BFF}), so they have conjugate higher powers.
In particular, they are unitally shift equivalent.

Alternatively, the vectors $\underline 1$ and $\underline 1^t$ are right and left eigenvectors for $A$.  Since the characteristic polyonmial of $A$ is $x^7(x-2)$, we have that the rank of $A^7$ is $1$.  Hence, $A^7 = (2^4 \underline{1})(\underline{1}^t)$ which implies the pair $(2^4 \underline{1}, \underline{1}^t)$ gives a unital shift equivalence from $A$ to $(2)$ of lag $7$.
\end{example}

\begin{example}\label{ex:Rourke}
  Consider Rourke's example of shift equivalent primitive matrices
  \[
    A = 
    \begin{pmatrix}
      1 & 2 & 1 \\
      1 & 1 & 0 \\
      1 & 0 & 1
    \end{pmatrix} \qquad \textrm{and} \qquad
    B = 
    \begin{pmatrix}
      1 & 0 & 1 & 0 & 1 \\
      0 & 1 & 1 & 1 & 0 \\
      1 & 1 & 1 & 0 & 0 \\
      1 & 0 & 0 & 0 & 1 \\
      0 & 1 & 0 & 1 & 0 
    \end{pmatrix}
  \]
  from the errata of \cite{Williams1973}.  A computation of the higher powers shows that the total amalgamation of $B^n$ is $A^n$ for $n\geq 2$,
  so the matrices have conjugate higher powers, in particular they are unitally shift equivalent.

  Effros \cite[p.41]{Effros1979} remarks that Handelman has shown that the matrices are strong shift equivalent, although we are not aware of an explicit proof in the literature.
  The matrices are in fact balanced strong shift equivalent, and we are grateful to Søren Eilers for finding the following set of moves using a computer search.
First consider the matrices
\[
R_0 = \begin{pmatrix}
      1 & 0 & 1 & 0 & 1  \\
      0 & 1 & 0 & 1 & 0  \\
      1 & 1 & 1 & 0 & 0  \\
      1 & 0 & 0 & 0 & 1  \\
      0 & 1 & 0 & 1 & 0  \\
      0 & 0 & 1 & 0 & 0 
    \end{pmatrix} \quad \textup{ and } \quad
    S_0
    \begin{pmatrix}
        1 & 0 & 0 & 0 & 0 & 0 \\
        0 & 1 & 0 & 0 & 0 & 1 \\
        0 & 0 & 1 & 0 & 0 & 0 \\
        0 & 0 & 0 & 1 & 0 & 0 \\
        0 & 0 & 0 & 0 & 1 & 0 \\
    \end{pmatrix}
\]
and observe that
\[
B = S_0 R_0 \quad \textup{ and }\quad  
B' \coloneq \begin{pmatrix}
      1 & 0 & 1 & 0 & 1 & 0 \\
      0 & 1 & 0 & 1 & 0 & 1 \\
      1 & 1 & 1 & 0 & 0 & 1 \\
      1 & 0 & 0 & 0 & 1 & 0 \\
      0 & 1 & 0 & 1 & 0 & 1 \\
      0 & 0 & 1 & 0 & 0 & 0
    \end{pmatrix} = R_0 S_1.
\]
Next, consider
\[
R_1' \coloneq \begin{pmatrix}
      1 & 0 & 1 & 0 & 1 & 0 \\
      0 & 1 & 0 & 1 & 0 & 1 \\
      1 & 1 & 1 & 0 & 0 & 1 \\
      1 & 0 & 0 & 0 & 1 & 0 \\
      0 & 0 & 1 & 0 & 0 & 0
\end{pmatrix}, \quad
R_1'' \coloneq \begin{pmatrix}
      1 & 0 & 1 & 0 & 1 & 0 \\
      0 & 1 & 0 & 1 & 0 & 1 \\
      1 & 0 & 1 & 0 & 1 & 1 \\
      1 & 0 & 0 & 0 & 1 & 0 \\
      0 & 0 & 1 & 0 & 0 & 0
\end{pmatrix}, \quad \textup{ and } \quad
S_1' \coloneq \begin{pmatrix}
    1 & 0 & 0 & 0 & 0 \\
    0 & 1 & 0 & 0 & 0 \\
    0 & 0 & 1 & 0 & 0 \\
    0 & 0 & 0 & 1 & 0 \\
    0 & 1 & 0 & 0 & 0 \\
    0 & 0 & 0 & 0 & 1 \\
\end{pmatrix},
\]
and observe that 
\[
B' = S_1' R_1', \quad 
B'' \coloneq \begin{pmatrix}
      1 & 0 & 1 & 0 & 1 & 0 \\
      0 & 1 & 0 & 1 & 0 & 1 \\
      1 & 0 & 1 & 0 & 1 & 1 \\
      1 & 0 & 0 & 0 & 1 & 0 \\
      0 & 1 & 0 & 1 & 0 & 1 \\
      0 & 0 & 1 & 0 & 0 & 0
    \end{pmatrix} 
    = S_1' R_1'', \quad \textup{ and } \quad R_1'S_1' = R_1'' S_1'.
\]
This is a balanced insplit between $B'$ and $B''$ which is move $\texttt{\textup{(I+)}}$ \text in \cite[Chapter 2.2]{Eilers-Ruiz}.
The matrix $A$ is now the total amalgamation of $B''$.
We conclude that $A$ and $B$ are balanced strong shift equivalent (in particular, strong shift equivalent).
\end{example}

\section{Continuous orbit equivalence} \label{sec:SE+->COE}
In this section, we prove that unitally shift equivalent matrices determine one-sided shifts of finite type that are continuously orbit equivalent.
Continuous orbit equivalence is an important link between symbolic dynamics and C*-algebras as well as topological full groups \cite{Matsumoto2010,Matui2015}.
A consequence of our result is that unitally shift equivalent matrices will have isomorphic topological full groups.
The theorem is a one-sided analogue of a recent result of Boyle showing that shift equivalence implies flow equivalence for all (not necessarily irreducible) shifts of finite type. 
From a shift equivalence, Boyle constructs an explicit $\mathsf{SL}$ equivalence thus proving flow equivalence.
This requires a careful study of the poset of irreducible components of the matrix.

Let $A$ and $B$ be square $\N$-matrices with no zero rows.
Recall from \cite[Section 5]{Matsumoto2010} (see also \cite{BCW17}) that the edge shifts $(\sigma_A, X_A)$ and $(\sigma_B, X_B)$ are \emph{continuously orbit equivalent} if there are continuous functions $k_A,l_A\colon X_A \to \N$ and $k_B,l_B\colon X_B \to \N$ and a homeomorphism $h\colon X_A \to X_B$ satisfying 
\begin{align}
    \sigma_B^{l_A(x)}\circ h(x) &= \sigma_B^{k_A(x)}\circ h(\sigma_A(x)), \\
    \sigma_A^{l_B(y)}\circ h^{-1}(y) &= \sigma_A^{k_B(y)}\circ h^{-1}(\sigma_B(y)), 
\end{align}
for all $x\in X_A$ and $y\in X_B$.

A point $x\in X_A$ is \emph{eventually periodic} if there are distinct $p,q\in \N$ such that $\sigma_A^p(x) = \sigma_A^q(x)$.
A continuous orbit equivalence $h\colon X_A \to X_B$ preserves eventually periodic points if $h(x)$ is eventually periodic precisely if $x$ is eventually periodic.
This extra condition of preserving eventually periodic points is important for connections to \'etale groupoids and topological full groups.


Boyle used $\mathsf{SL}$ equivalence of matrices to characterise flow equivalence, and there is a unital version called $\mathsf{SL_+}$ equivalence that we introduce below.
Although we know that $\mathsf{SL_+}$ equivalence implies continuous orbit equivalence, we do not know if the converse holds. 
We take the opportunity to reiterate a related question, see \cite[Conjecture~6.2.1]{Eilers-Ruiz}
(see \cite[Definition 2.3.1]{Eilers-Ruiz} for the definition of unital reduction):

\begin{question}
Given square $\N$-matrices $A$ and $B$, if $(\sigma_A, X_A)$ and $(\sigma_B, X_B)$ are continuously orbit equivalent, is it possible to transform $A$ into $B$ using out-splits, balanced in-splits, and unital reduction (and their inverses)?
\end{question}


In order to deal with not necessarily irreducible shifts of finite type it is important to keep track of the structure of its irreducible components.
A square $\N$-matrix $A$ admits a canonical partially ordered set (poset) of its irreducible components $\cP^A$ where for $p,q\in \cP^A$, we have $p \leq q$ when there exist an integer $k\geq 1$ and indices $i$ and $j$ for components $p$ and $q$, respectively, such that $A^k(i,j) \neq 0$.
Given a poset $\cP$, a matrix $A$ is $\cP$-partitioned if $\cP$ defines partitions of the row and column sets of $A$ in a way that respects its irreducible components.
We refer the reader to (and assume familiarity with) \cite[Section 4]{Boyle2024} for a careful development of $\cP$-partitioned matrices.

Below we state and prove the main result of the section and the paper.
In order to verify that unital shift equivalence implies continuous orbit equivalence (which is the one-sided analogue of flow equivalence), we go via $\mathsf{SL_+}$ equivalence \cite[Section 5]{Arklint-Eilers-Ruiz2022}.
This is a unital version of $\mathsf{SL}$ equivalence, which is the key notion that Boyle uses to characterise flow equivalence.
Roughly, two matrices $(A,B)$ are $\mathsf{SL}$ equivalent if there are partitioned square integral matrices $U$ and $V$ for which every diagonal block has determinant 1 satisfying $U A V = B$ 
in a way that respects the poset structures, see \cite[Definition 5.3]{Boyle2024} for the precise definition (see also \cite[Section 2]{Boyle-Huang} where this first appeared).
The matrix $U$ induces a map on the cokernels $\check{U}\colon \Z^{|A|}/A \Z^{|A|} \to \Z^{|B|}/ B\Z^{|B|}$ via multiplication by $U$.
Let $C$ and $D$ be matrices, and let $v$ and $w$ be row vectors.   The pairs 
\[ (C, v) \quad \text{and } \quad (D, w) \]
are $\mathsf{SL}_+$ equivalent if there is an $\mathsf{SL}$ equivalence $(U,V)$ from $C$ to $D$ such that 
\[
\check{U}( \underline{1} + v^t + C \Z^{|C|}) = \underline 1 +w^t+ D\Z^{|D|}.
\]

We say a block upper triangular matrix $A$ with a given poset structure is in \emph{canonical form} provided that
\begin{enumerate}
    \item every diagonal entry is positive;
    \item If $A^n(i,j)>0$ for some $n$, then $A(i,j)>0$; and 
    \item each irreducible diagonal block is either the 1 by 1 matrix $(1)$ or a matrix with dimension at least 3, with smith normal form having at least two ones in its diagonal, and with all diagonal entries having values at least 2.
\end{enumerate} 
Our definition of canonical form of a matrix is chosen in such a way that for a nonzero row vector $v_A$ of non-negative integer entries, the graph with adjacency matrix \[ \begin{pmatrix} 0 & v_A \\ 0 & A \end{pmatrix} \] is in augmented canonical form in the sense of \cite[Definitions~4.2]{Arklint-Eilers-Ruiz2022}.

A pair $(A,B)$ of canonical form matrices with a given poset structure is in \emph{standard form} provided that the corresponding diagonal blocks of $A$ and $B$ have the same dimensions.  If $(A,B)$ is in standard form, then for any nonzero row vectors $v_A$ and $v_B$ of non-negative integer entries, the graphs with adjacency matrices 
\[ \begin{pmatrix} 0 & v_A \\ 0 & A \end{pmatrix} \quad \text{and} \quad \begin{pmatrix} 0 & v_B \\ 0 & B \end{pmatrix}\]
are in augmented standard form in the sense of \cite[Definition~5.1]{Arklint-Eilers-Ruiz2022}.  If $(A,B)$ are in standard form and $(I-A^t, v_A)$ and $(1-B^t, v_B)$ are $\mathsf{SL}_+$-equivalent, then by \cite[Theorem~5.4]{Arklint-Eilers-Ruiz2022}, the edge shifts $(\sigma_{C} ,X_C)$ and $(\sigma_D, X_D)$ are continuously orbit equivalent, where
\[C = \begin{pmatrix} 0 & v_A \\ 0 & A \end{pmatrix} \quad \text{and}  \quad D= \begin{pmatrix} 0 & v_B \\ 0 &B\end{pmatrix}.\]

Interestingly, since stabilisations do not work as well with continuous orbit equivalence, we need a result of Boyle and Huang, and since continuous orbit equivalence is not as well understood as flow equivalence, we also need to invoke C*-algebra theory to reach the conclusion that unitally shift equivalent matrices implies the edge shifts are continuously orbit equivalent.

\begin{theorem}\label{thm:SE+->COE}
Let $A$ and $B$ be square $\N$-matrices with no zero rows.
If $A$ and $B$ are unitally shift equivalent, then the edge shifts $(\sigma_A,X_A)$ and $(\sigma_B, X_B)$ are continuously orbit equivalent in a way that preserves eventually periodic points. 
\end{theorem}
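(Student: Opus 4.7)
The plan is to convert the given unital shift equivalence into an $\mathsf{SL}_+$-equivalence between augmented pairs and then apply \cite[Theorem~5.4]{Arklint-Eilers-Ruiz2022}. First, using out-splits, balanced in-splits, and unital reductions, I would pass from $A$ and $B$ to matrices $A'$ and $B'$ in canonical form, together with augmentation row vectors $v_{A'}$ and $v_{B'}$, so that the pair $(A', B')$ is in standard form and the edge shifts of the augmented matrices $C' \coloneq \begin{pmatrix} 0 & v_{A'} \\ 0 & A' \end{pmatrix}$ and $D' \coloneq \begin{pmatrix} 0 & v_{B'} \\ 0 & B' \end{pmatrix}$ are continuously orbit equivalent, in a manner preserving eventually periodic points, to $X_A$ and $X_B$ respectively. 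By \cref{lem:one-sided-conjugacy-SE+} and \cref{prop:111->SE+} these moves preserve unital shift equivalence, so $A'$ and $B'$ remain unitally shift equivalent; the Boyle--Huang framework is invoked here to avoid illegal stabilizations.

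Next, I would apply Boyle's theorem from \cite{Boyle2024} to convert a shift equivalence $(R,S)$ between $A'$ and $B'$ into an $\mathsf{SL}$-equivalence $(U,V)$ between $I - (A')^t$ and $I - (B')^t$ respecting the poset of irreducible components. The unital hypothesis produces $R^t \underline 1 = ((B')^t)^k \underline 1$ modulo some eventual application of $(B')^t$, which descends to $\check R([\underline 1]_{A'}) = [\underline 1]_{B'}$ in $\BF(B')$, because $(B')^t$ acts trivially on its Bowen--Franks cokernel. Since Boyle's construction is compatible with the cokernel isomorphism induced by the shift equivalence, this transfers to the condition $\check U([\underline 1]_{A'}) = [\underline 1]_{B'}$. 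The core task is then to promote this equality to the $\mathsf{SL}_+$ compatibility $\check U(\underline 1 + v_{A'}^t) = \underline 1 + v_{B'}^t$ in $\BF(B')$, arranging the augmentation vectors appropriately. Once this is achieved, \cite[Theorem~5.4]{Arklint-Eilers-Ruiz2022} yields a continuous orbit equivalence between $X_{C'}$ and $X_{D'}$ preserving eventually periodic points, and the first step transports this back to the desired equivalence between $X_A$ and $X_B$.

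The hard part will be the upgrade from $\mathsf{SL}$ to $\mathsf{SL}_+$. Boyle's construction is intricate and depends on careful bookkeeping of the poset of irreducible components, so it is not a priori transparent that the resulting $U$ preserves the required cokernel class containing the augmentation vectors. One must either verify that the vectors $v_{A'}$ and $v_{B'}$ may be selected compatibly with the unital condition and the poset structure, or adjust $(U,V)$ by elementary $\mathsf{SL}$-moves that shift the cokernel class by controlled amounts without disturbing the block structure. We anticipate that the Boyle--Huang machinery, together with the existing work in \cite{Arklint-Eilers-Ruiz2022} on the augmented standard form, supplies exactly this flexibility, which is why those ingredients feature prominently in the argument.
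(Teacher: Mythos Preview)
Your outline matches the paper's proof closely: reduce to augmented canonical/standard form via out-splits and balanced in-splits (the paper invokes \cite[Proposition~5.2.1]{Eilers-Ruiz} and does not need unital reductions here), apply Boyle's polynomial shift equivalence equation to obtain an $\mathsf{SL}$ equivalence, track the Bowen--Franks class through Boyle--Huang destabilisation, and finish with \cite[Theorem~5.4]{Arklint-Eilers-Ruiz2022}.

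Two points deserve correction. First, after stripping to the essential core $A'$, the matrices $A'$ and $B'$ are \emph{not} unitally shift equivalent with their native units $[\underline 1,0]$; the lag-$1$ elementary equivalence from $A$ to $A'$ sends $u_A$ to a class $w_A$ with $[w_A]=[\underline 1 + v_{A'}^t]$ in $\BF(A')$. So the unital hypothesis directly yields $\check R([\underline 1 + v_{A'}^t]) = [\underline 1 + v_{B'}^t]$, and there is no separate ``promotion'' step from $[\underline 1]$ to $[\underline 1 + v^t]$ as you describe. Second, the step you flag as the hard part---showing that Boyle's $U$ induces the same Bowen--Franks map as $R^t$---is in fact a short explicit computation: writing $U' = \begin{pmatrix} W^t & -S^t \\ R^t & I-(B')^t \end{pmatrix}$ from Boyle's equation, one checks directly that $U'\begin{pmatrix}x\\y\end{pmatrix}$ and $\begin{pmatrix}0\\R^t x\end{pmatrix}$ differ by an element of the image of $\begin{pmatrix} I & 0 \\ 0 & I-(B')^t\end{pmatrix}$, so $U'$ and $R^t$ induce the same cokernel map; Boyle--Huang \cite[Corollary~4.8]{Boyle-Huang} then guarantees the destabilised $(U,V)$ inherits this. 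No ad hoc elementary $\mathsf{SL}$-adjustments are required.
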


\begin{proof}
By \cite[Proposition 5.2.1]{Eilers-Ruiz}, we may assume that the matrices are of the form
\[
A = 
\begin{pmatrix}
0 & v_A \\
0 & A'
\end{pmatrix}\quad \textup{and}\quad 
B = \begin{pmatrix}
0 & v_B \\
0 & B'
\end{pmatrix}
\]
where $v_A$ and $v_B$ are row vectors, and $A'$ and $B'$ are essential matrices (i.e. there are no zero rows or columns) that are block upper triangular with irreducible diagonal blocks and the greatest common divisor of each diagonal block is 1.  If $A$ is already an essential matrix that is block upper triangular with irreducible diagonal blocks and the greatest common divisor of each diagonal block is 1, then $A'=A$ and there is no $v_A$.  Similarly for $B$.  Moreover, we may assume the corresponding diagonal blocks of $A'$ and $B'$ are of the same size (here we are using the fact that an essential matrix that is shift equivalent to a permutation matrix is a permutation matrix of the same size).
This process uses out-splits and balanced in-splits both of which preserve the Matsumoto eventual conjugacy class (and hence the unital shift equivalence class) of the matrices
and the continuous orbit equivalence class of the one-sided edge shifts, see \cref{prop:111->SE+}.

Suppose $A$ and $B$ are unitally shift equivalent.
The matrices $\begin{pmatrix}
    v_A \\ A'
\end{pmatrix}$ and $\begin{pmatrix}
    0 & I
\end{pmatrix}$
define a lag $1$ shift equivalence from $A$ to $A'$ (an elementary strong shift equivalence),
and this induces an isomorphism of dimension triples $(G_A,G_A^+,\theta_A) \cong (G_{A'},G_{A'}^+,\theta_{A'})$.
We let $w_A$ be the image of $u_A$ under this isomorphism.
Similarly, there is a shift equivalence from $B$ to $B'$, and we define $w_B$ analogously to obtain isomorphisms
\begin{align}
    (G_A,G_A^+,\theta_A,u_A) &\cong (G_{A'},G_{A'}^+,\theta_{A'},w_A) \\
    (G_B,G_B^+,\theta_B,u_B) &\cong (G_{B'},G_{B'}^+,\theta_{B'},w_B).
\end{align}
As we assume that $A$ and $B$ are unitally shift equivalent, the left most quadruples are isomorphic, so the right most quadruples are isomorphic.
Using a similar argument as in \cref{prop:SE+-dimension-data}, we may choose a shift equivalence $(R,S)$ from $A'$ to $B'$ satisfying $\hat{R}w_A = (\hat{B'})^k w_B$, for some $k\in \N$.  Note that $[w_A] = [\underline{1}+ v_A^t]$ in $\BF(A')$ and $[w_B] = [\underline{1}+ v_B^t]$ in $\BF(B')$

Next, we apply Boyle's observation \cite[Proposition 4.3 and Theorem 4.7]{Boyle2024} to see that we have a partitioned polynomial shift equivalence from $A'$ to $B'$.  The partitioned polynomial shift equivalence equation (with the variable set to $1$) takes the form

\begin{equation} \label{eq:Boyle-PSE}
  \begin{pmatrix}
    W^t & -S^t \\
    R^t & I-(B')^t
  \end{pmatrix}
  \begin{pmatrix}
    I-(A')^t & 0 \\
    0 & I
  \end{pmatrix}
  \begin{pmatrix}
    I & S^t \\
    -R^t & I-(B')^t
  \end{pmatrix} 
  =
\begin{pmatrix}
  I & 0 \\
  0 & I - (B')^t
\end{pmatrix}
\end{equation}

 where $W = I + A' + \cdots + (A')^{\ell - 1}$.
 This can be verified directly using the shift equivalence relations, and it defines an $\mathsf{SL}$ equivalence between 
 $\begin{pmatrix} I-(A')^t & 0 \\ 0 & I \end{pmatrix}$ and $\begin{pmatrix} I & 0 \\ 0 & I-(B')^t \end{pmatrix}$ (see \cite[Proof of Theorem 1.1]{Boyle2024}).
 
 Observe that $U'\coloneq \begin{pmatrix} W^t & -S^t \\ R^t & I-(B')^t \end{pmatrix}$ and $R^t$ induce the same maps on the Bowen--Franks groups.
 Indeed, we observe that 
 \[
 \begin{pmatrix}
    W^t & -S^t \\
    R^t & I-(B')^t
  \end{pmatrix} 
  \begin{pmatrix}
      x \\ y
  \end{pmatrix}
  =
  \begin{pmatrix}
      0 \\ R^t x
  \end{pmatrix}
  +
  \begin{pmatrix}
  I & 0 \\
  0 & I - (B')^t
\end{pmatrix}
\begin{pmatrix}
    x-S^t y \\ y
\end{pmatrix}
 \]
 and, on the other hand, 
 \[
 \begin{pmatrix}
     0 & I \\ 
     R^t & 0
 \end{pmatrix}
 \begin{pmatrix}
      x \\ y
  \end{pmatrix}
  =
    \begin{pmatrix}
      0 \\ R^t x
  \end{pmatrix}
  +
  \begin{pmatrix}
  I & 0 \\
  0 & I - (B')^t
\end{pmatrix}
\begin{pmatrix}
    y \\ 0
\end{pmatrix}.
 \]

 By our choice of $(R,S)$, it follows from the observation \eqref{eq:R-hat-check} that $U'$ induces a map on the Bowen--Franks groups.  Since $U'$ and $V'$ are $\mathsf{SL}$ matrices, the pair $(U', V')$ induces an $\mathsf{SL}$ isomorphism between the K-webs (as defined \cite[Section~3]{Boyle-Huang}) of $I-(A')^t$ and $I-(B')^t$.  By \cite[Corollary~4.8]{Boyle-Huang}, this K-web isomorphism is induced by a possibly different $\mathsf{SL}$ equivalence $(U,V)$ between $I-(A')^t$ and $I-(B')^t$.
 Since $R^t$, $U'$, and $U$ all induce the same isomorphism of Bowen--Franks groups, $U$ must send the class of $w_A$ in $\BF(A')$ to the class of $w_B$ in $\BF(B')$. Hence, we conclude that $(U,V)$ is an $\mathsf{SL_+}$ equivalence between $(I-(A')^t, v_A)$ and $(I-(B')^t, v_B)$.

One can check that the algorithms described in \cite[Proposition 4.3 and Lemma 5.2]{Arklint-Eilers-Ruiz2022} to put a pair of matrices into augmented standard form preserve the Matsumoto eventual conjugacy class as well as induce $\mathsf{SL}$ isomorphisms between the K-webs.  By \cite[Corollary~4.8]{Boyle-Huang}, we may now further assume that $( (I-(A')^t , I- (B')^t)$ are in standard form and there is an $\mathsf{SL}_+$ equivalence between $(I-(A')^t, v_A)$ and $(I-(B')^t, v_B)$.  It now follows from \cite[Theorem 5.4]{Arklint-Eilers-Ruiz2022} (see also \cite[Theorem 4.2.2]{Eilers-Ruiz}) that the one-sided edge shifts are continuously orbit equivalent.
This uses the fact that a diagonal-preserving *-isomorphism of the associated Cuntz--Krieger C*-algebras implies that the shifts are continuously orbit equivalent in a way that preserves eventually periodic points, see \cite[Theorem 5.1]{BCW17} and \cite[Theorem 5.3]{Arklint-Eilers-Ruiz2018}.
\end{proof}

As in the proof above, if $A$ and $B$ are unitally shift equivalent, then their \'etale groupoids are isomorphic (see \cite[Theorem 5.1]{BCW17} and \cite[Theorem 5.3]{Arklint-Eilers-Ruiz2018}), so their topological full groups are also isomorphic, see \cite{Matui2015}. 
We record this as a separate corollary.

\begin{corollary}
    Let $A$ and $B$ be square $\N$-matrices with no zero rows.
    If $A$ and $B$ are unitally shift equivalent, then the topological full groups of $(\sigma_A,X_A)$ and $(\sigma_B, X_B)$ are isomorphic.
\end{corollary}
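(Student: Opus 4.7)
The plan is to chain \cref{thm:SE+->COE} with two well-known implications from the literature, exactly as indicated in the paragraph preceding the corollary. First, I would apply \cref{thm:SE+->COE} to obtain from the unital shift equivalence of $A$ and $B$ a continuous orbit equivalence $h\colon X_A \to X_B$ between the one-sided edge shifts that preserves eventually periodic points. The emphasis on preserving eventually periodic points is essential at this stage, since without this extra condition the groupoid picture breaks down at periodic orbits.

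Next, I would translate this dynamical equivalence into a topological isomorphism of the étale groupoids associated to the shifts. Specifically, a continuous orbit equivalence that preserves eventually periodic points is precisely the condition that gives an isomorphism of the Deaconu--Renault groupoids $\mathcal{G}_{\sigma_A} \cong \mathcal{G}_{\sigma_B}$; this is the content of \cite[Theorem 5.1]{BCW17} together with \cite[Theorem 5.3]{Arklint-Eilers-Ruiz2018}, both of which were already invoked in the proof of \cref{thm:SE+->COE}.

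Finally, having isomorphic étale groupoids immediately yields isomorphic topological full groups, since the topological full group is defined functorially from the groupoid: any groupoid isomorphism carries full bisections to full bisections and respects composition. This last step is the reconstruction-type observation going back to \cite{Matui2015}.

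Since each step is a direct citation, there is no real obstacle; the only mild point to be careful about is ensuring that the preservation of eventually periodic points from \cref{thm:SE+->COE} is correctly matched to the hypotheses needed for the groupoid isomorphism, but this is exactly why that hypothesis was included in the statement of the theorem.
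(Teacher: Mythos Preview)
Your proposal is correct and matches the paper's own argument essentially verbatim: the paper records in the paragraph just before the corollary that unital shift equivalence gives isomorphic \'etale groupoids via \cite[Theorem 5.1]{BCW17} and \cite[Theorem 5.3]{Arklint-Eilers-Ruiz2018}, and then invokes \cite{Matui2015} for the passage to topological full groups. Your emphasis on the preservation of eventually periodic points as the hypothesis needed for the groupoid step is exactly the right observation.
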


The next corollary relates to Hazrat's graded classification conjecture since the unital dimension data of a matrix $A$ coincides with the unital graded K-theory of the Leavitt path algebra attached to $A$.

\begin{corollary}\label{cor:LPA-isomorphism}
    Let $A$ and $B$ be square $\N$-matrices with no zero rows.
    If $A$ and $B$ are unitally shift equivalent, then the Leavitt path algebras of $A$ and $B$ are isomorphic.
\end{corollary}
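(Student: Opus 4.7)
The plan is to realise each Leavitt path algebra as the Steinberg algebra of the associated boundary path groupoid, and to extract a groupoid isomorphism from the proof of \cref{thm:SE+->COE}.

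Concretely, I would first revisit that proof and observe that it produces more than a bare continuous orbit equivalence: the invocation of \cite[Theorem~5.4]{Arklint-Eilers-Ruiz2022} yields a \emph{diagonal-preserving} $*$-isomorphism between the Cuntz--Krieger C*-algebras of the graphs $E_A$ and $E_B$. Next, by the reconstruction theorems of Brownlowe--Carlsen--Whittaker \cite[Theorem~5.1]{BCW17} together with \cite[Theorem~5.3]{Arklint-Eilers-Ruiz2018}, such a diagonal-preserving $*$-isomorphism is induced by an isomorphism of topological groupoids $\cG_A \cong \cG_B$, where $\cG_A$ and $\cG_B$ are the boundary path groupoids of $E_A$ and $E_B$. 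Finally, since $L_K(E_A)$ is canonically isomorphic to the Steinberg algebra of the ample groupoid $\cG_A$ over any field $K$ (and similarly for $B$), any isomorphism of these groupoids descends to an isomorphism of Leavitt path algebras.

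The main point to verify carefully is that the $*$-isomorphism extracted from the proof of \cref{thm:SE+->COE} really is diagonal-preserving, since the subsequent reconstruction of the groupoid and thence the algebra depends on this; once this is confirmed, the remainder is a routine repackaging of the cited machinery. I would also note that this approach does not, by itself, deliver a \emph{graded} isomorphism of Leavitt path algebras, so it leaves open the stronger conclusion predicted by Hazrat's conjecture; that finer question would require tracking the canonical $\Z$-grading through each step of the argument, which the continuous orbit equivalence machinery is not designed to preserve.
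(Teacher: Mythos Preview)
Your proposal is correct and follows essentially the same route as the paper. The paper does not spell out a separate proof of \cref{cor:LPA-isomorphism}, but the sentence preceding the previous corollary already records that the \'etale groupoids are isomorphic (via \cite[Theorem~5.1]{BCW17} and \cite[Theorem~5.3]{Arklint-Eilers-Ruiz2018}), and from there the Leavitt path algebra isomorphism is immediate through the Steinberg algebra identification, exactly as you outline; your remark that this does not yield a graded isomorphism is also apt and matches the paper's framing of the corollary relative to Hazrat's conjecture.
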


\end{document}